\documentclass[fontsize=10pt]{article}

\usepackage[square,numbers,nonamebreak]{natbib}
\usepackage{setspace}

\usepackage[english]{babel}

\usepackage{chngcntr}
%Uncomment this line if you want equations taking into account section's numbers
%\counterwithout{equation}{section}

\usepackage[hang]{footmisc}
\setlength\footnotemargin{10pt}

 at 11truept
\font\ssc=pplrc9d at 11 truept
\newcommand\qedbox{$\rlap{$\sqcap$}\sqcup$}

%Uncomment the lines from 23 to 32 in order to see a more real version of the paper 
%Then go to line 235 and follow the next instructions
%Note that if you decide to not uncomment these lines the document cannot handle spaces in a very nice way
%\usepackage[    protrusion=true,
%            expansion=true,
%            final,
%kerning=true,
%spacing=true,
%factor=1100,stretch=10,shrink=10,
%            babel,
%	tracking=alltext,
%letterspace = 10
%                ]{microtype}
\setstretch{1.15}

\usepackage[hmarginratio=1:1, bottom=2cm, top=2.5cm]{geometry}

\usepackage{graphicx}
\usepackage[usenames,dvipsnames,svgnames,table]{xcolor}
\definecolor{Maroon}{cmyk}{0, 0.87, 0.68, 0.32}
\definecolor{RoyalBlue2}{cmyk}{80,100,0,0.1}

\newcommand\auths[1]{\large \textsc{\textcolor{Maroon}{#1}}\setstretch{1.2}}
\newcommand\titl[1]{\center \linespread{1.1}\color{RoyalBlue2}\Large\textbf{ #1}\color{black}\bigskip} 
\renewcommand\abstract[1]{
\begin{center}
{\textbf{Abstract}}
\end{center}
{
\linespread{1.1}\fontsize{9pt}{-10pt}\selectfont #1}}

\usepackage[utf8]{inputenc}
\usepackage[sc,osf]{mathpazo}
\usepackage[euler-digits]{eulervm}
\usepackage[T1]{fontenc}
\usepackage{mathtools}

\DeclareSymbolFont{operators}{\encodingdefault}{ppl}{m}{n}
\DeclareMathAlphabet{\mathbf}{\encodingdefault}{ppl}{bx}{n}
\DeclareMathAlphabet{\mathit}{\encodingdefault}{ppl}{m}{it}

\usepackage{amsfonts}
\usepackage{amsmath}
\usepackage{ragged2e}
\usepackage{titlesec}
\usepackage{amssymb}

\renewcommand{\thesection}{\arabic{section}}
 
\titleformat{\section}{\medskip\bigskip\normalfont\Large\bf}{\thesection}{0.5em}{}
\titleformat{\subsection}{\smallskip\bigskip\normalfont\large\bf}{\thesubsection}{0.5em}{}

\usepackage{amsthm}
\newtheoremstyle{dotless}{}{}{\itshape}{}{\bfseries}{}{1em}{}

\theoremstyle{dotless}
\newtheorem{theo}{Theorem}

\newtheorem{lem}[theo]{Lemma}
\newtheorem{cor}[theo]{Corollary}

\renewenvironment{proof}{\smallbreak\noindent {\sc Proof \;---\;}}{\hfill\qedbox}

\counterwithout{theo}{section}
%Uncomment the following line if you want your theorems taking account of the number of the section
\numberwithin{theo}{section}

%Comment the following lines if you have an old version of the tex
\DeclareOldFontCommand{\rm}{\normalfont\rmfamily}{\mathrm}
\DeclareOldFontCommand{\sf}{\normalfont\sffamily}{\mathsf}
\DeclareOldFontCommand{\tt}{\normalfont\ttfamily}{\mathtt}
\DeclareOldFontCommand{\bf}{\normalfont\bfseries}{\mathbf}
\DeclareOldFontCommand{\it}{\normalfont\itshape}{\mathit}
\DeclareOldFontCommand{\sl}{\normalfont\slshape}{\@nomath\sl}
\DeclareOldFontCommand{\sc}{\normalfont\scshape}{\@nomath\sc}

%PUTS HERE YOUR MACROS

%\usepackage{refcheck}

%%%%%%%%%%%%%

%\usepackage[hidelinks]{hyperref}
\begin{document}
%\hypertarget{first_page}{}\label{first_page}

\titl{A Note on the Series' of Ordinal Numbers
\footnote{The author is supported by GNSAGA (INdAM) and is a member of the non-profit association ``Advances in Group Theory and Applications'' (www.advgrouptheory.com).}}

\auths{Marco Trombetti}
%\center{{(\fontsize{9pt}{-10pt}{\it\selectfont  Received Oct. 25, 1811; Accepted May 31, 1832 --- Communicated by G. Theorist})}}

\thispagestyle{empty}
\justify\noindent
\setstretch{0.3}
\abstract{The aim of this short note is to provide a proof to a statement of Sierpi\'nski concerning the number of possible sums of a series (of type $\lambda<\aleph_1$) of arbitrary ordinal numbers.}

\setstretch{2.1}
\noindent
{\fontsize{10pt}{-10pt}\selectfont {\it Mathematics Subject Classification (2020)}: 03E10}\\[-0.8cm]

\noindent 
\fontsize{10pt}{-10pt}\selectfont  {\it Keywords}: ordinal number; series\\[-0.8cm]

\setstretch{1.1}
\fontsize{11pt}{12pt}\selectfont
\section{Introduction}

It has been proved by Sierpi\'nski in \cite{Sierpinski} that if you have an infinite series $\{\alpha_i\}_{i\in\omega}$ of type $\omega$ of ordinal numbers, then there are only finitely many possibilities for the sums $\sum_{i\in\omega}\alpha_{\varphi(i)}$ where $\varphi:\omega\rightarrow\omega$ is a bijection. Moreover, it is stated without proof in the paper (and to my knowledge it is not proved anywhere else) that if you replace~$\omega$ by an ordinal number $\lambda<\aleph_1$, then the number of such possibilities is at most countable (and it can actually be infinite). I do not know the proof of Sierpi\'nski, but he did not sketch it and he did not add the usual words ``comme on le vérifie sans peine'' he used for something which is not difficult to prove. Sierpi\'nski was used to prove results which were only stated by prominent mathematicians of his time (you can find many instances of this fact in his book \cite{CardOrd}), and I thought it would have been nice to prove this result on the occasion of the 140th anniversary of the birth of Sierpi\'nski. My proof is certainly inspired by the ideas in~\cite{Sierpinski}, and actually goes a bit further the result stated by Sierpi\'nski, generalizing his main result (see Corollary \ref{cor1}).

\section{The proof}

Let $\lambda$ be an infinite ordinal number and let $\mathcal{S}=\{\alpha_i\}_{i\in\lambda}$ be a sequence of ordinal numbers. We define $\psi(\mathcal{S})$ as the smallest ordinal number $\mu$ such that $\mu=\sum_{\gamma\leq i}\alpha_i$ for some ordinal $\gamma<\lambda$ (this is the ``remainder'' of the series). For the sake of conciseness, we need to give some definitions: $$T(\mathcal{S})=\left\{\sum_{i\in\lambda}\alpha_{\varphi(i)}\,\big|\, \varphi:\lambda\rightarrow\lambda\;\textnormal{is a bijection}\right\},$$ $$T'(\mathcal{S})=\left\{\sum_{i\in\lambda}\alpha_{\varphi(i)}\,\big|\, \varphi:\lambda\rightarrow\lambda\;\textnormal{is an injection}\right\},$$ $$T''(\mathcal{S})=\left\{\sum_{i\in\lambda}\alpha_{\varphi(i)}\,\big|\, \varphi:\lambda\rightarrow\lambda\;\textnormal{is any map}\right\},$$ $$M(\mathcal{S})=\left\{\psi\big(\{\alpha_{\varphi(i)}\}_{i\in\lambda}\big)\;|\,\varphi:\lambda\rightarrow\lambda\;\textnormal{is a bijection}\right\}$$ and $$M'(\mathcal{S})=\left\{\psi\big(\{\alpha_{\varphi(i)}\}_{i\in\lambda}\big)\;|\,\varphi:\lambda\rightarrow\lambda\;\textnormal{is an injection}\right\}.$$ The result proved by Sierpi\'nski is that $T(\mathcal{S})$ is finite whenever $\lambda=\omega$; the result which is only stated by him asserts that $T(\mathcal{S})$ is at most countable whenever $\lambda<\aleph_1$. In what follows, we actually prove that $T''(\mathcal{S})$ is at most countable when $\lambda<\aleph_1$. Of course, if $T''(\mathcal{S})\supseteq T'(\mathcal{S})\supseteq T(\mathcal{S})$. The key point seems to be proving the result for $\lambda=\omega2=\omega+\omega$.

\begin{lem}\label{lem1}
If $\mathcal{S}=\{\alpha_i\}_{i\in \omega2}$ is a collection of ordinal numbers, then the cardinality of $T(\mathcal{S})$  is at most countable.
\end{lem}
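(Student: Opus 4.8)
The plan is to use the splitting $\omega2=\omega+\omega$ of the index set together with the elementary fact that an $\omega$-indexed sum of ordinals is the supremum of its finite partial sums. I would begin by fixing a bijection $\varphi\colon\omega2\to\omega2$ and writing
$$\sum_{i\in\omega2}\alpha_{\varphi(i)}=P_\varphi+Q_\varphi,\qquad P_\varphi=\sum_{i<\omega}\alpha_{\varphi(i)},\qquad Q_\varphi=\sum_{i<\omega}\alpha_{\varphi(\omega+i)}.$$
By the recursive definition of transfinite addition, $P_\varphi=\sup_{n<\omega}\bigl(\alpha_{\varphi(0)}+\cdots+\alpha_{\varphi(n-1)}\bigr)$, and since $\varphi$ is injective each of these partial sums is an ordinal sum of finitely many of the $\alpha_i$ with pairwise distinct indices; the same is true of $Q_\varphi$. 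Hence, setting
$$\mathcal F=\{\,\alpha_{j_1}+\cdots+\alpha_{j_n}\;\mid\;n<\omega,\ j_1,\dots,j_n\in\omega2\ \text{pairwise distinct}\,\},$$
both $P_\varphi$ and $Q_\varphi$ belong to $\mathcal F^{\sup}:=\{\,\sup S\mid S\subseteq\mathcal F\,\}$, and therefore $T(\mathcal S)\subseteq\{\,p+q\mid p,q\in\mathcal F^{\sup}\,\}$.

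It then remains to show that $\mathcal F^{\sup}$ is countable, for then $T(\mathcal S)$ is contained in the image of the countable product $\mathcal F^{\sup}\times\mathcal F^{\sup}$ under addition and so is itself countable. Now $\mathcal F$ is countable, being a set of values obtained from finitely many terms of a countable family, so its order type is some ordinal $\theta<\aleph_1$; enumerate $\mathcal F$ increasingly as $\{f_\alpha\mid\alpha<\theta\}$. Given a nonempty $S\subseteq\mathcal F$, either $S$ has a largest element, in which case $\sup S\in\mathcal F$, or else $\mu:=\sup\{\,\alpha<\theta\mid f_\alpha\in S\,\}$ is a limit ordinal $\le\theta$ and $\sup S=\sup\{\,f_\alpha\mid\alpha<\mu\,\}$. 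Since $\theta<\aleph_1$ there are only countably many limit ordinals $\le\theta$, so $\mathcal F^{\sup}$ is countable, which proves the lemma.

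The step I expect to carry the weight is this last one: controlling $\mathcal F^{\sup}$ is exactly where the uncountably many bijections of $\omega2$ get compressed into countably many possible values of $P_\varphi$ and of $Q_\varphi$, and it relies essentially on $\omega2$ — hence on $\mathcal F$ — being countable. I would also observe that this argument uses only the definition of ordinal addition, not Sierpi\'nski's finiteness theorem; moreover, running the same idea recursively along a presentation of an arbitrary countable limit ordinal as an increasing $\omega$-chain of smaller ordinals yields the countability of $T''(\mathcal S)$ for every $\lambda<\aleph_1$ in one go, so the reduction to $\omega2$ is not strictly needed for the countability statement itself — although a more refined, $\omega2$-centred analysis of which sums are actually attained is presumably what is required for the sharper finiteness results announced in Corollary \ref{cor1}.
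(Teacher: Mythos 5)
Your proof is correct, but it takes a genuinely different route from the paper's. You argue by pure counting: after splitting each rearranged sum as $P_\varphi+Q_\varphi$ along $\omega2=\omega+\omega$, you note that each piece is the supremum of a set of finite partial sums, that the set $\mathcal F$ of finite sums of terms of $\mathcal S$ with distinct indices is countable, and that the set of possible suprema of subsets of a fixed countable set of ordinals is countable (your enumeration by the order type $\theta<\aleph_1$ is sound: a supremum is either attained in $\mathcal F$ or determined by a limit ordinal $\mu\le\theta$). The paper instead analyses the remainders $\psi$ of rearranged series: using the finiteness of the sets $\mathcal T_1,\mathcal T_2$ of indices with only finitely many larger terms, it shows that the remainder of any rearrangement is one of three fixed ordinals $\mu_1,\mu_2,\mu_3$, so every element of $T(\mathcal S)$ has the form $\beta+\mu_i+\gamma+\mu_j$ with $\beta,\gamma$ finite sums. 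Your approach buys elementarity and robustness: it uses only the definition of transfinite addition, works verbatim for injections or arbitrary maps $\varphi$ (repetitions do not affect the countability of the relevant set of finite sums), and, as you observe, recursion along an $\omega$-cofinal presentation extends it to all $\lambda<\aleph_1$ without appealing to Sierpi\'nski's finiteness theorem, which the paper invokes in Corollary \ref{cor1}. The paper's finer analysis buys structural information absent from the bare statement: the set of remainders $M(\mathcal S)$ is finite, and it is precisely the countability of $M(\mathcal T)$ --- read off from the proof, not the statement, of Lemma \ref{lem1} --- that Corollary \ref{cor1} uses. If your argument were substituted for the paper's, you should add the (easy) remark that tail sums $\sum_{\gamma\le i<\omega2}\alpha_{f(i)}$ fall under the same counting, so that $M(\mathcal T)$ and $M'(\mathcal T)$ are countable as well.
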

\begin{proof}
Let $\mathcal{S}_1=\{\alpha_i\}_{i\in\omega}$ and $\mathcal{S}_2=\{\alpha_i\}_{\omega\leq i<\omega2}$. Let $\mathcal{T}_1$ be the set of all $i\in\omega$ for which the set $\mathcal{U}_1(\alpha)=\{j\in\omega\;|\;\alpha_j\geq\alpha_i\}$  is finite. If $\mathcal{T}_1$ were infinite, then we could find a strictly descending chain of ordinal numbers, a contradiction. Thus $\mathcal{T}_1$ is finite. Similarly, we define $\mathcal{T}_2$ as the set of all ordinal numbers $\omega\leq i<\omega2$ for which $\mathcal{U}_1(\alpha)=\{j\;|\;\omega\leq j<\omega2\;\textnormal{and}\;\alpha_j\geq\alpha_i\}$ is finite, and we observe as before that $\mathcal{T}_2$ is finite.

Let $\mu_1=\psi(\mathcal{S}_1)$, $\mu_2=\psi(\mathcal{S}_2)$ and $\mu_3=\psi(\mathcal{S}_3)$, where $\mathcal{S}_3=\{\alpha_{\varphi(i)}\}_{i\in\omega2}$ and\linebreak $\varphi: \omega2\rightarrow\omega2$ is a bijection such that $\varphi\circ\varphi=\varphi$, $\varphi(2n)=\omega+2n$ and $\varphi(\omega\epsilon+2n+1)=\omega\epsilon+2n+1$ for any non-negative integer $n$ and $\epsilon\in\{0,1\}$.

Let $f:\omega2\rightarrow\omega2$ be a bijection, and put $\mathcal{S}_f=\{\alpha_{f(i)}\}_{i\in\omega2}$. We claim that $\mu:=\psi(\mathcal{S}_{f})$ is $\mu_1$, $\mu_2$ or $\mu_3$. Define $$A=\{i\;|\;f(i)<\omega,\; i\geq\omega\}\quad\textnormal{and}\quad B=\{i\;|\;f(i)\geq\omega,\;i\geq\omega\}.$$ Let $m_1$ be the maximum of the (finite) set $f(\mathcal{T}_1\cup\mathcal{T}_2)$ and let $m$ be a non-negative integer such that $\omega+m>m_1$; of course we have $\mu=\sum_{i\geq \omega+m}\alpha_{f(i)}$. Now, choose $i\geq\omega+m$ and assume $f(i)$ belongs to $A$. Then (by the choice of $m$) there is $j<\omega$ such that $a_k\geq a_{f(i)}$ for any $k\geq j$, and so $\alpha_{\varphi(\omega+2k)}=\alpha_{2k}\geq\alpha_{f(i)}$ for any $k\geq j$. Similarly, if $f(i)\in B$, then there is $j<\omega$ such that $\alpha_{\omega+k}\geq \alpha_{f(i)}$ for any $k\geq j$, so $\alpha_{\varphi(\omega+2k+1)}=\alpha_{\omega+2k+1}\geq\alpha_{f(i)}$. 

Thus, if for instance $A$ and $B$ are infinite, then it easily follows that $\mu\leq\mu_3$, and a symmetric argument yields that $\mu_3\leq\mu$; so in this case $\mu=\mu_3$. If $A$ is infinite and $B$ is finite, then $\mu=\mu_1$. If $A$ is finite and $B$ is infinite, then $\mu=\mu_2$. The claim is proved.

A similar argument shows that $\psi\big(\{\alpha_{f(i)}\}_{i\in\omega}\big)$ is $\mu_1$, $\mu_2$ or $\mu_3$.

\smallskip

What we have proved so far shows that every ordinal in $T(\mathcal{S})$ is of the form $\beta+\mu_i+\gamma+\mu_j$, where $i,j\in\{1,2,3\}$ and both $\beta$ and $\gamma$ are sums of finitely many elements of $\mathcal{S}$. It is therefore clear that the cardinality of $T(\mathcal{S})$ is at most countable.
\end{proof}

\begin{cor}\label{cor1}
Let $\mathcal{S}=\{\alpha_i\}_{i\in\omega}$ be a collection of ordinal numbers. Then the cardinality of the set $T'(\mathcal{S})$ is at most countable.
\end{cor}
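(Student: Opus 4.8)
The plan is to reduce the Corollary to Lemma~\ref{lem1}. Since any injection $\varphi:\omega\to\omega$ has infinite range, writing $R=\varphi(\omega)$ the element $\sum_{i\in\omega}\alpha_{\varphi(i)}$ is merely a rearrangement of the subfamily $\{\alpha_r\}_{r\in R}$; compared with Sierpi\'nski's bijective setting, the new feature is that the indices in the complement $C=\omega\setminus R$ contribute nothing, whereas in a rearrangement of a full family of type $\omega2$ every index has to be used. The device I would employ is to park those unused indices, together with a countable supply of harmless zeros, in a first block of order type $\omega$, and to recover $\sum_{i\in\omega}\alpha_{\varphi(i)}$ as the second block. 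Concretely, associate with $\mathcal{S}$ the sequence $\mathcal{S}^{\flat}=\{\alpha^{\flat}_i\}_{i\in\omega2}$ given by $\alpha^{\flat}_i=\alpha_i$ for $i<\omega$ and $\alpha^{\flat}_{\omega+i}=0$ for $i<\omega$; this has type $\omega2$, so Lemma~\ref{lem1} applies to it and $T(\mathcal{S}^{\flat})$ is at most countable.

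Given an injection $\varphi:\omega\to\omega$ with range $R$, I would build a bijection $g:\omega2\to\omega2$ by putting $g(\omega+i)=\varphi(i)$ for every $i<\omega$ — so that $g$ maps $[\omega,\omega2)$ bijectively onto $R$ — and letting $g|_{[0,\omega)}$ be an arbitrary bijection of $[0,\omega)$ onto $\omega2\setminus R$; such a bijection exists because $\omega2\setminus R$ contains $[\omega,\omega2)$ and is therefore countably infinite, and this works whether or not $C$ is finite. Splitting the sum at $\omega$ and using that $\alpha^{\flat}_{\varphi(i)}=\alpha_{\varphi(i)}$ since $\varphi(i)<\omega$, one gets
\[
\sum_{i\in\omega2}\alpha^{\flat}_{g(i)}=\beta+\sum_{i\in\omega}\alpha_{\varphi(i)},\qquad \beta:=\sum_{i\in\omega}\alpha^{\flat}_{g(i)} .
\]
Since the left-hand side lies in $T(\mathcal{S}^{\flat})$, this shows that every element $x$ of $T'(\mathcal{S})$ is a \emph{right summand} of some element of $T(\mathcal{S}^{\flat})$, i.e.\ $\beta+x\in T(\mathcal{S}^{\flat})$ for some ordinal $\beta$.

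To finish, I would use that every ordinal $a$ has only finitely many right summands: if $a=\omega^{\gamma_1}m_1+\dots+\omega^{\gamma_k}m_k$ is its Cantor normal form, the ordinals $c$ admitting an equation $a=b+c$ are exactly $0$ together with the ``tails'' $\omega^{\gamma_j}m'+\omega^{\gamma_{j+1}}m_{j+1}+\dots+\omega^{\gamma_k}m_k$ with $1\le j\le k$ and $1\le m'\le m_j$, hence at most $1+m_1+\dots+m_k$ of them. Combined with the countability of $T(\mathcal{S}^{\flat})$, this places $T'(\mathcal{S})$ inside a countable union of finite sets, so $T'(\mathcal{S})$ is at most countable. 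The one point needing genuine thought — and the main idea of the argument — is the decision to aim for ``$\sum_{i\in\omega}\alpha_{\varphi(i)}$ is a right summand of an element of $T(\mathcal{S}^{\flat})$'' rather than for equality: the residue $\beta$ produced by the indices of $C$ cannot be avoided, so one trades exact realization for membership in a right-summand set, which is precisely what makes the finiteness of that set relevant.
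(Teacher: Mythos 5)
Your proof is correct, and it reaches the corollary by a genuinely different route than the paper. The paper argues by contradiction and works with the remainder function $\psi$: it first replaces $T'(\mathcal{S})$ by $M'(\mathcal{S})$ (each sum being a finite partial sum followed by a remainder), splits the injections according to whether the complement of the range is finite or infinite, disposes of the cofinite case by citing Sierpi\'nski's original theorem that $T(\mathcal{S})$ is finite, and in the remaining case identifies the relevant remainders with $M(\mathcal{T})$ for a reindexing $\mathcal{T}$ of $\mathcal{S}$ of type $\omega2$, to which Lemma~\ref{lem1} applies. You instead argue directly: you pad $\mathcal{S}$ with an $\omega$-block of zeros, park the unused indices (together with the zeros) in the first block of a bijection of $\omega2$, so that every injective rearrangement sum of $\mathcal{S}$ occurs as a right summand of an element of the countable set $T(\mathcal{S}^{\flat})$, and then invoke the classical fact that a fixed ordinal admits only finitely many right summands (clear from the Cantor normal form, as you indicate). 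Your construction of the bijection $g$ and the splitting of the sum at $\omega$ are sound, and the zero-padding makes the argument uniform in the injection: no case distinction between finite and infinite complement, no proof by contradiction, and no appeal to Sierpi\'nski's finiteness theorem for $\lambda=\omega$. The extra ingredient you need, the finiteness of the set of right summands, plays the same role that the passage from $T'$ to $M'$ (absorbing the countably many finite partial sums) plays in the paper. Both proofs share the essential idea of funnelling the injection through Lemma~\ref{lem1} by stuffing the leftover indices into an auxiliary $\omega$-block; yours does so with explicit zeros and total sums, the paper's stays entirely within the $\psi$/$M$ machinery it had already set up for the lemma.
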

\begin{proof}
Suppose by contradiction that the cardinality of $T'(\mathcal{S})$ is uncountable, so also the cardinality of $M'(\mathcal{S})$ is uncountable.  Let $A$ (respectively, $B$) be the set of all injections $\varphi:\omega\rightarrow\omega$ such that $\omega\setminus\varphi(\omega)$ is infinite (finite). Clearly, $M'(\mathcal{S})=U\cup V$, where $$U=\big\{\psi\big(\{\alpha_{\tau(i)}\}_{i\in\omega}\big)\;|\;\tau\in A\big\}\quad\textnormal{and}\quad V=\big\{\psi\big(\{\alpha_{\tau(i)}\}_{i\in\omega}\big)\;|\;\tau\in B\big\}.$$ Now, the cardinality of $T(\mathcal{S})$ is finite by \cite{Sierpinski} and so the cardinality of $V$ is countable. Thus we only need to focus on the cardinality of $U$.

Choose a bijection $f:\omega2\rightarrow\omega$, and put $\mathcal{T}=\{\alpha_{f(i)}\}_{i\in\omega2}$. Clearly, the cardinality of~$M(\mathcal{T})$ is the same as the cardinality of $U$. But Lemma \ref{lem1} shows that the cardinality of $M(\mathcal{T})$ is at most countable, so we get a contradiction.
\end{proof}

\begin{theo}\label{theo1}
Let $\lambda$ be an ordinal number such that $\omega\leq\lambda<\aleph_1$. If $\mathcal{S}=\{\alpha_i\}_{i\in \lambda}$ is a collection of ordinal numbers, then the cardinality of the set $T'(\mathcal{S})$ is at most countable.
\end{theo}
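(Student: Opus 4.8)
The plan is to deduce the statement from a sharper one that I would prove by transfinite induction on $\lambda$: \emph{for every countable ordinal $\lambda$ and every set $A$ of ordinal numbers with $|A|\le\aleph_0$, the set}
$$\Sigma(\lambda,A)=\Bigl\{\sum_{i\in\lambda}\beta_i\,\Big|\,(\beta_i)_{i\in\lambda}\in A^{\lambda}\Bigr\}$$
\emph{is at most countable.} This yields the theorem immediately: put $A=\{\alpha_i\mid i\in\lambda\}$, which is countable as the image of the countable set $\lambda$; for any map $\varphi\colon\lambda\to\lambda$ the sequence $(\alpha_{\varphi(i)})_{i\in\lambda}$ takes all its values in $A$, so $T'(\mathcal{S})\subseteq T''(\mathcal{S})\subseteq\Sigma(\lambda,A)$ (in fact $T''(\mathcal{S})=\Sigma(\lambda,A)$, since every element of $A$ equals some $\alpha_j$). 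The point of the reformulation is that it does away with all bookkeeping about the domains and codomains of rearranging maps: we are simply free to choose each term of the series from the fixed countable pool $A$.

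The cases $\lambda=0$ and $\lambda$ a successor are immediate. Indeed $\Sigma(0,A)=\{0\}$; and if the claim holds for $\mu$, then every element of $\Sigma(\mu+1,A)$ is of the form $s+a$ with $s\in\Sigma(\mu,A)$ and $a\in A$, so $\Sigma(\mu+1,A)$ is a countable union of countable sets, hence countable. (In particular $\Sigma(n,A)$ is countable for each $n<\omega$.) So everything rests on the limit case.

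Let $\lambda$ be a countable limit ordinal and fix an increasing sequence $\mu_0<\mu_1<\cdots$ of ordinals with supremum $\lambda$. Given $(\beta_i)_{i\in\lambda}\in A^{\lambda}$, the partial sums $P_{\mu}=\sum_{i\in\mu}\beta_i$ are non-decreasing in $\mu$, so, by the definition of a transfinite sum at a limit ordinal,
$$\sum_{i\in\lambda}\beta_i=\sup_{\mu<\lambda}P_{\mu}=\sup_{n\in\omega}P_{\mu_n}.$$
Since $P_{\mu_n}\in\Sigma(\mu_n,A)$ and, by the induction hypothesis, $A':=\bigcup_{\mu<\lambda}\Sigma(\mu,A)$ is a countable union of countable sets and so is countable, every element of $\Sigma(\lambda,A)$ is the supremum of a countable subset of the countable set $A'$ of ordinals. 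It therefore suffices to show that $\{\sup Q\mid Q\subseteq A'\}$ is countable, which is the heart of the matter. Such a supremum is $0$, or the greatest element of $Q$ (hence an element of $A'$), or an ordinal accumulated from below by $A'$; and a countable set of ordinals admits only countably many such accumulation points, for if $f\colon\eta\to A'$ is the increasing enumeration of $A'$ — so that $\eta$ is a countable ordinal — then every accumulation point of $A'$ equals $\sup f[\zeta]$ for some limit ordinal $\zeta\le\eta$, and there are only countably many ordinals $\le\eta$. Hence $\Sigma(\lambda,A)$ is countable, completing the induction.

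The main obstacle is precisely this limit step: there are $2^{\aleph_0}$ sequences $(\beta_i)_{i\in\lambda}$, so one must explain why their transfinite sums are comparatively scarce. The leverage is that every such sum is a supremum of partial sums lying in a \emph{single} countable set of ordinals, plus the elementary facts that a countable set of ordinals has at most countably many accumulation points and that a countable union of countable sets is countable — and this is the only place where the hypothesis $\lambda<\aleph_1$ is genuinely used. Specialised to $\lambda=\omega$, the argument gives another proof of Corollary \ref{cor1}, in fact with arbitrary maps in place of injections.
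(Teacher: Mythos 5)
Your proposal is correct, but it follows a genuinely different route from the paper. The paper proves Theorem \ref{theo1} by transfinite induction whose base case $\lambda=\omega$ is Corollary \ref{cor1}, which in turn rests on Lemma \ref{lem1} (the analysis of the remainders $\psi$ for series of type $\omega2$) and on Sierpi\'nski's finiteness theorem for bijective rearrangements of $\omega$-series; at the inductive step it writes $\lambda=\sum_{i\in\omega}\lambda_i$ with each $\lambda_i<\lambda$ and applies Corollary \ref{cor1} again to the $\omega$-series built from the countably many possible block sums. You instead prove outright the stronger statement that the sums of all type-$\lambda$ series with terms drawn from a fixed countable pool $A$ form a countable set --- essentially Corollary \ref{cor2}, from which Theorem \ref{theo1} follows at once via $T'(\mathcal{S})\subseteq T''(\mathcal{S})=\Sigma(\lambda,A)$ --- by a direct induction on $\lambda$: the successor step is immediate, and at a countable limit every sum is the supremum of an $\omega$-sequence of partial sums lying in the single countable set $A'=\bigcup_{\mu<\lambda}\Sigma(\mu,A)$, while a countable set of ordinals has only countably many suprema of its subsets (its own elements together with at most countably many accumulation points, indexed by the limit initial segments of its increasing enumeration); all of these steps check out, and the implicit use of countable unions of countable sets is the same choice principle the paper itself employs. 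What your approach buys is self-containedness and generality: it makes no appeal to Sierpi\'nski's theorem or to Lemma \ref{lem1}, it treats arbitrary maps from the start, so Corollaries \ref{cor1} and \ref{cor2} come out as special cases rather than serving as inputs, and it isolates exactly where $\lambda<\aleph_1$ is used (cofinality $\omega$ at limit stages and countability of the union). What it does not recover, and what the paper's heavier machinery retains, is the finer structural information: the finiteness of $T(\mathcal{S})$ for $\lambda=\omega$ and the description of rearranged sums through the three remainders in Lemma \ref{lem1}, which a purely cardinality-based argument cannot see.
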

\begin{proof}
We use transfinite induction on $\lambda$. In case $\lambda=\omega$, this is just Co\-rol\-la\-ry~\ref{cor1}. Suppose the result holds for an ordinal number $\mu$ such that $\omega\leq\mu<\lambda$ and let's prove it for $\lambda$. There is a sequence $\{\lambda_i\}_{i\in\omega}$ of ordinal numbers (eventually $0$) strictly smaller than $\lambda$ such that $\lambda_0=0$ and $\lambda=\sum_{i\in\omega}\lambda_i$. For each $i\in\omega$, define $\mathcal{S}_i=\{\alpha_{j}\}_{\lambda_i\leq j<\lambda_{i+1}}$. By induction, every $T'(\mathcal{S}_i)$ is countable, so such is the disjoint union $\mathcal{U}$ of all of them; write $\mathcal{U}=\{\beta_i\}_{i\in\omega}$. Then $T'(\mathcal{U})$ is countable by Corollary \ref{cor1} and we have done since $T'(\mathcal{U})=T'(\mathcal{S})$.
\end{proof}

\begin{cor}\label{cor2}
Let $\lambda$ be an ordinal number such that $\omega\leq\lambda<\aleph_1$. If $\mathcal{S}=\{\alpha_i\}_{i\in \lambda}$ is a collection of ordinal numbers, then the cardinality of the set $T''(\mathcal{S})$ is at most countable.
\end{cor}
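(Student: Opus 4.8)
The plan is to deduce Corollary \ref{cor2} from Theorem \ref{theo1} by passing to an auxiliary collection in which every term of $\mathcal{S}$ is repeated infinitely often. The gain from doing so is that, with that many repetitions available, a sum produced by an \emph{arbitrary} map $\varphi:\lambda\rightarrow\lambda$ can already be produced by an \emph{injection} applied to the new collection, so that $T''(\mathcal{S})$ sits inside a set of the form $T'(\mathcal{S}')$.

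Since $\omega\leq\lambda<\aleph_1$, the ordinal $\lambda$ is countably infinite, so I would fix a bijection $b:\lambda\rightarrow\lambda\times\omega$ and define $\mathcal{S}'=\{\gamma_k\}_{k\in\lambda}$ by $\gamma_k=\alpha_j$ whenever $b(k)=(j,n)$. The relevant property of $\mathcal{S}'$ is that for each $j<\lambda$ the value $\alpha_j$ is taken by $\gamma_k$ at every index $k$ in the infinite set $b^{-1}(\{j\}\times\omega)$; hence every term of $\mathcal{S}$ occurs among the terms of $\mathcal{S}'$ countably infinitely many times.

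The core step is then the following. Given an arbitrary map $\varphi:\lambda\rightarrow\lambda$, I would construct an injection $\sigma:\lambda\rightarrow\lambda$ with $\gamma_{\sigma(k)}=\alpha_{\varphi(k)}$ for every $k<\lambda$. Writing $V=\{\alpha_i\mid i\in\lambda\}$ for the (at most countable) set of distinct terms, the sets $Q_v=\{k<\lambda\mid\gamma_k=v\}$ partition $\lambda$ into countably infinite pieces, while the sets $G_v=\{k<\lambda\mid\alpha_{\varphi(k)}=v\}$ partition $\lambda$ into at most countable pieces; picking an injection $G_v\hookrightarrow Q_v$ for each $v\in V$ and letting $\sigma$ be their union does the job. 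Since the two sequences of summands then coincide, $\sum_{i\in\lambda}\gamma_{\sigma(i)}=\sum_{i\in\lambda}\alpha_{\varphi(i)}$, and consequently $T''(\mathcal{S})\subseteq T'(\mathcal{S}')$. As $\mathcal{S}'$ is a collection of ordinal numbers indexed by $\lambda$ with $\omega\leq\lambda<\aleph_1$, Theorem \ref{theo1} then yields that $T'(\mathcal{S}')$ is at most countable, and hence so is $T''(\mathcal{S})$.

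I do not expect a substantial obstacle here, since all the real work is already contained in Theorem \ref{theo1}; the only place needing a bit of care is the verification that infinitely many copies of each term genuinely suffice to imitate an arbitrary map by an injection — i.e. the construction of $\mathcal{S}'$ and $\sigma$ above — after which the equality of the two ordinal sums is immediate, because they have the same summands in the same order.
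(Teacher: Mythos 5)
Your proposal is correct and follows exactly the route of the paper: replace $\mathcal{S}$ by a $\lambda$-indexed collection in which every term occurs infinitely often, observe that $T''(\mathcal{S})$ is contained in the $T'$ of this new collection, and apply Theorem \ref{theo1}. The only difference is that you spell out the inclusion $T''(\mathcal{S})\subseteq T'(\mathcal{S}')$ (via the injection $\sigma$ built from the pieces $G_v\hookrightarrow Q_v$), which the paper leaves implicit.
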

\begin{proof}
Consider a series $\mathcal{S}_1=\{\beta_i\}_{i\in\lambda}$ in which for every ordinal $\alpha\in\mathcal{S}$ there are infinitely many indexes $i$ such that $\beta_i=\alpha$. In this case, $T'(\mathcal{S}_1)\supseteq T''(\mathcal{S})$, so we may apply Theorem \ref{theo1} and prove the statement.
\end{proof}

%\medskip
%
%For any pair of ordinal numbers $\lambda\leq\mu$ and any collection $\mathcal{S}=\{\alpha_i\}_{i\in\mu}$ of ordinal numbers, we define $$T_{\lambda,\mu}''(\mathcal{S})=\left\{\sum_{i\in\lambda}\alpha_{\varphi(i)}\;|\;\textnormal{$\varphi:\lambda\rightarrow\mu$ is any map}\right\}.$$ Of course, if $\lambda=\mu$, then $T_{\lambda,\lambda}''(\mathcal{S})=T''(\mathcal{S})$.
%
%
%\begin{cor}
%Let $\mu$ be an infinite ordinal number of cardinality $\aleph$, and let $\mathcal{S}=\{\alpha_i\}_{i\in\mu}$ be a collection of ordinal numbers. Then $T''_{\lambda,\mu}(\mathcal{S})$ has at most cardinality $\aleph$ for any $\lambda$ such that $\lambda\leq\mu$ and $\lambda<\aleph_1$.
%\end{cor}
%\begin{proof}
%The proof depends on Corollary \ref{cor2} and the fact that $\aleph^{\aleph_0}=\aleph$.
%\end{proof}

\bigskip\bigskip\bigskip

\renewcommand{\bibsection}{\begin{flushright}\Large
%...uncomment the following line
%\textls[150]
{
REFERENCES}\\
\rule{8cm}{0.4pt}\\[0.8cm]
\end{flushright}}

\bigskip\bigskip

%If the following content is not entirely contained in the page, then you will need to uncomment the following line
%\newpage

\begin{flushleft}
\rule{8cm}{0.4pt}\\
\end{flushleft}

{
\sloppy
\noindent
Marco Trombetti

\noindent 
Dipartimento di Matematica e Applicazioni ``Renato Caccioppoli''

\noindent
Università degli Studi di Napoli Federico II

\noindent
Complesso Universitario Monte S. Angelo

\noindent
Via Cintia, Napoli (Italy)

\noindent
e-mail: marco.trombetti@unina.it 

}

%\hypertarget{last_page}{}\label{last_page}

\end{document}